\documentclass[11pt]{amsart}
\usepackage{amsthm,amsmath,amssymb,amscd,graphicx,epsfig}




\setlength{\oddsidemargin}{1.1cm}        \setlength{\evensidemargin}{1.1cm}
\setlength{\marginparsep}{1mm}          \setlength{\marginparwidth}{.5cm}
\setlength{\textwidth}{14cm}            
\setlength{\textheight}{21.0cm}           \setlength{\headheight}{.1in}
\setlength{\headsep}{.3in}              
\setlength{\parskip}{1.mm}

%
{\end{list}}
{
   \newtheorem{theorem}{Theorem}[section]
   
   \newtheorem{lemma}[theorem]{Lemma}

   \newtheorem{corollary}[theorem]{Corollary}

}
{\theoremstyle{definition}

   \newtheorem{example}[theorem]{Example}
   \newtheorem{definition}[theorem]{Definition}

}

%

\newcommand{\ZZ}{{\mathbb{Z}}}

\newcommand{\bc}{{\mathbf{c}}}
\newcommand{\bd}{{\mathbf{d}}}
\newcommand{\ba}{{\mathbf{a}}}

\newcommand{\cO}{{\mathcal O}}

\newcommand{\ccd}{\widetilde{\psi}_{u,v}(\bc,\bd)}

\setcounter{tocdepth}{1}
\begin{document}
\title{On the complete $\bc\bd$-index of a Bruhat interval}

\author{Kalle Karu}
\thanks{The author was partially supported by the NSERC Discovery grant}
\address{Department of Mathematics\\ University of British Columbia \\
  1984 Mathematics Road\\
Vancouver, B.C. Canada V6T 1Z2}
\email{karu@math.ubc.ca}

\begin{abstract}
We study the non-negativity conjecture of  the complete $\bc\bd$-index
of a Bruhat interval defined by Billera and Brenti. For each
$\bc\bd$-monomial $M$ we construct a set of paths, such that if a
"flip condition" is satisfied, then the number of these paths is the
coefficient of the monomial $M$ in the complete $\bc\bd$-index. When
the monomial contains at most one $\bd$, then the condition follows
from Dyer's proof of Cellini's conjecture. Hence the coefficients of
these monomials are non-negative. We also relate the flip condition to
shelling of Bruhat intervals.
\end{abstract}

\maketitle

\section{Introduction}

Let $(W,S)$ be a Coxeter system and $u< v$ two elements in $W$
related in the Bruhat order. Billera and Brenti in \cite{BB} define a
polynomial $\ccd$ in the non-commuting variables $\bc, \bd$, called
the complete $\bc\bd$-index of the interval $[u,v]$. They conjecture
that this polynomial has non-negative coefficients. In this article we
study the non-negativity conjecture by constructing for each interval
$[u,v]$ in the Bruhat order and each $\bc\bd$-monomial $M$ a set of
paths $T_M(u,v)$, such that if a condition, called the {\em flip
  condition} is satisfied, then the number of paths in $T_M(u,v)$ is
equal to the coefficient of $M$ in the complete $\bc\bd$-index
$\ccd$. We conjecture the flip condition to be true for all intervals
and all monomials, which then would imply the non-negativity
conjecture.   

Using the notation explained in the next section, we briefly describe the flip
condition in its different forms and give evidence for it to hold. To
construct the set of paths $T_M(u,v)$ we need to fix a
reflection order $\cO$. Let $\overline{T}_M(u,v)$ be the set of paths
constructed using the reverse order $\overline{\cO}$. By induction on
the length of $[u,v]$, both sets have the same number of paths, equal
to the coefficient of the monomial $M$ in $\ccd$. Let 
$F: T_M(u,v) \to \overline{T}_M(u,v)$ be a bijection, called a
flip. The (strong) 
flip condition states that if $M$ starts with $\bc$, then one can
choose $F$ in such a way that if 
$F(x)=y$, then the first reflection in $x$ is less than or equal to the first
reflection in $y$. (This condition is then used to define
$T_{M'}(w,v)$ for longer intervals $[w,v]$, where $w<u<v$.) 

As a special case, consider $M=\bc^n$. Then $T_M(u,v)$ is the set of
ascending paths of length $n$ from u to $v$ (ascending with respect to
the reflection order $\cO$), and $\overline{T}_M(u,v)$ is the set of
descending paths of length $n$. A result of Dyer \cite{Dyer-C} states
that for any $x\in T_M(u,v)$ and $y\in \overline{T}_M(u,v)$, the first
reflection in $x$ always precedes the first reflection in $y$. Hence
the flip condition for this $M$ is true for any choice of $F$. As we
will see below, this result suffices to prove that $|T_M(u,v)|$ is 
the coefficient of $M$ in $\ccd$ in case the monomial $M$ contains at
most one $\bd$.  

The flip condition can be described in an equivalent form as
follows. The polynomial $\ccd$ is computed by summing the
ascent-descent sequences of all paths from $u$ to $v$. Let us fix a
reflection $t$ and sum the ascent-descent sequences of all those paths of
length $n$ from $u$ to $v$ that have their first reflection $\leq t$. This
sum can be expressed in the form  
\[ f_n(\bc,\bd)+ A g_{n-1}(\bc,\bd)\]
for some homogeneous $\bc\bd$-polynomials $f_n, g_{n-1}$ of degree
$n, n-1$, respectively. The (strong) flip condition is equivalent to
$g_{n-1}$ having non-negative coefficients. 

The second form of the flip condition can be related to the shelling
of the Bruhat interval. When $C$ is a regular $CW$-complex that is
topologically an $(n-1)$-ball or an $(n-1)$-sphere, then the
$\bc\bd$-index of of $C$ can be expressed in the form:
\[ f_n(\bc,\bd)+ \ba g_{n-1}(\bc,\bd),\]
for some homogeneous polynomials $f_{n}$ and $g_{n-1}$ with
non-negative coefficients \cite{Karu}. The Bruhat order on the
interval $[u,v]$ is 
shellable with respect to the lexicographic ordering of maximal chains
\cite{Dyer-S}. This implies that paths of 
maximal length from $u$ to $v$ with first reflection $\leq t$ are the
paths in the poset of a regular $CW$-complex $C$ that is topologically
a ball or a sphere. This means that the $f_n$ and $g_{n-1}$ in the
two formulas 
above coincide, and in particular that the flip condition holds for
paths of maximal length. 

We consider the two positive results described above as evidence for
the conjecture that the flip condition holds in general.

The approach to computing the $\bc\bd$-index by counting paths in
$T_M(u,v)$  is motivated by the theory of sheaves on posets
\cite{Karu}. One can 
define a sheaf on an appropriate poset constructed using length
$n$ paths from $u$ to $v$ in the Bruhat graph. Then the flip
condition states that one can carry out the same operations on this
sheaf as in the case of the constant sheaf on a Gorenstein* poset described in 
\cite{Karu}. The result of these operations is a vector space whose
dimension is the coefficient of $M$ in the $\bc\bd$-index. However,
since the sheaf for the Bruhat graph is constructed from the paths in
the graph, the operations reduce to counting paths with a given
ascent-descent sequence. Therefore, we only work with paths in the
Bruhat graph and do not mention sheaves again.

In the next section we recall the definition of the complete
$\bc\bd$-index in terms of a reflection order. We then construct the
sets $T_M(u,v)$ and give the condition for theses sets to count the
coefficient of $M$ in the complete $\bc\bd$-index.

\section{The complete $\bc\bd$-index.}

We fix a Coxeter system $(W,S)$ (see \cite{Humphreys, Bjorner-Brenti}) and a
reflection order $\cO$ (see \cite{Dyer-S}). The latter is a total
order on the set of reflections of $(W,S)$, satisfying a condition on
dihedral subgroups. The reverse of the order $\cO$ is also a
reflection order. We denote it by $\overline\cO$.

Let $l(x)$ be the length function on $W$. We write $u\prec v$ if
$l(u)<l(v)$ and $u^{-1}v$ is a reflection. The relation $\prec$
generates the Bruhat order on $W$. The Bruhat graph has vertex set $W$
and an edge from $u$ to $v$ if $u\prec v$.

Let $u < v$ in the Bruhat order. A path of length $n$ from $u$ to
$v$ in the Bruhat graph is a sequence
\[ x=(u=x_0\prec x_1\prec x_2\prec \cdots \prec x_n \prec x_{n+1}=v). \]
(Note a slightly unusual convention for the length. For example, the
path $(u\prec v)$ has length $0$.)  We let $B_n(u,v)$ be the set of
all paths of length $n$ from $u$ to $v$, and $B(u,v)= \cup_n
B_n(u,v)$. We label an edge $x_i\prec x_{i+1}$ with the reflection
$t_i = x_i^{-1}x_{i+1} $. The ascent-descent sequence of the path is
\[ w(x) = \beta_1 \beta_2 \ldots \beta_n,\]
where
\[ \beta_i = \begin{cases} A & \text{if $t_{i-1} < t_i$,} \\ D &
  \text{if $t_{i-1} > t_i$}. \end{cases}\] 
The reflections $t_i$ here are related by the reflection order $\cO$.
 
\begin{figure}[t] 
\centerline{\psfig{figure=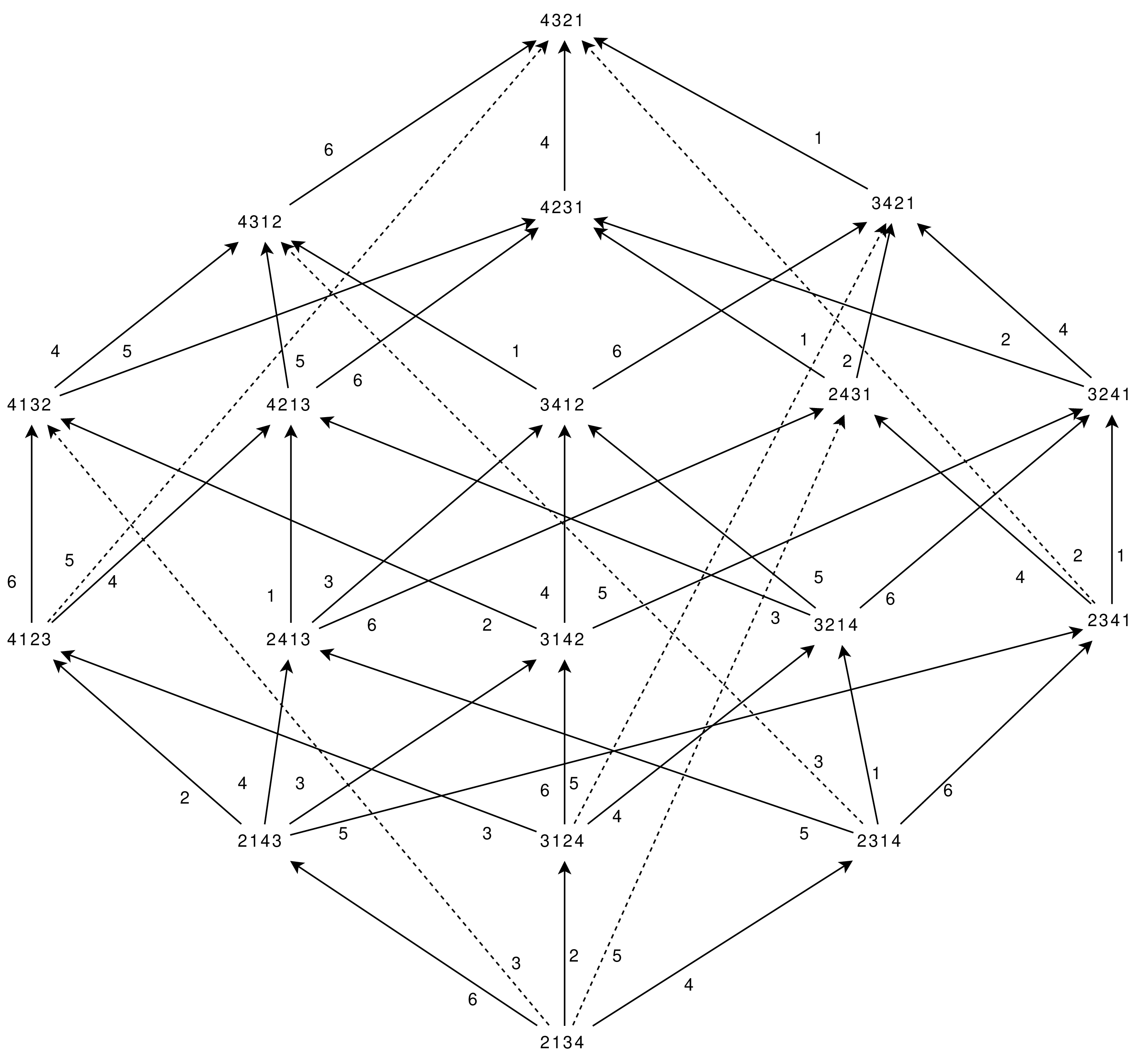,width=7cm}\label{fig-bruh}}
\caption{The Bruhat graph of the interval $[2134,4321]$ in $S_4$.}
\end{figure}

\begin{example} Figure~1 shows the Bruhat graph of the
  interval $[2134,4321]$ in the Coxeter system where the group is
  the symmetric group $S_4$ generated by transpositions
  $(12),(23),(34)$. The full Bruhat graph of this system can be found in
  \cite{Bjorner-Brenti}. The reflections here are the transpositions in $S_4$
  and they are ordered as follows: 
\[ (12)<(13)<(14)<(23)<(24)<(34).\]
We number the reflections so that $(12)$ has number $1$, $(13)$ has
number $2$, and so on. The edges in the Bruhat graph are then labeled
with the numbers of the corresponding reflections. For example, the path
$2134\prec 2143\prec 4123\prec 4132\prec 4312 \prec 4321$ has labels
$62646$, hence its ascent-descent sequence is $DADA$. The path
$2134\prec 3124 \prec 3421\prec 4321$ has labels $251$ and
ascent-descent sequence $AD$.
\end{example}

Let $\ZZ\langle A,D\rangle$ be the polynomial ring in non-commuting
variables $A$ and $D$. Summing the ascent-descent sequences of all
paths from $u$ to $v$ gives a polynomial in $A$ and $D$:
\[ \widetilde{\phi}_{u,v}(A,D) = \sum_{x\in B(u,v)} w(x).\]
The complete $\bc\bd$-index is obtained from this polynomial by a
change of variable. Let $\bc = A+D$ and $\bd = AD+DA$. This gives an
inclusion of rings
\[  \ZZ\langle \bc,\bd \rangle \subset \ZZ\langle A,D\rangle.\]
Billera and Brenti \cite{BB} prove that the polynomial
$\widetilde{\phi}_{u,v}(A,D)$ lies in this subring, hence can be
expressed in terms of $\bc$ and $\bd$:
\[ \widetilde{\phi}_{u,v}(A,D) = \ccd.\]
The polynomial $\ccd$ is the complete $\bc\bd$-index of the interval
$[u,v]$. It does not depend on the chosen reflection order $\cO$.

The rings $\ZZ\langle \bc,\bd \rangle$ and $\ZZ\langle A,D\rangle$ are
graded so that $A,D, \bc$ have degree $1$ and $\bd$ has degree $2$. We
define the involution $f\mapsto \overline{f}$ in the ring $\ZZ\langle
A,D\rangle$ by $\overline{f(A,D)} = f(D,A)$. Elements of $\ZZ\langle
\bc,\bd \rangle$ are invariant by this involution.

We will consider below homogeneous polynomials $p(A,D) \in \ZZ\langle
A,D\rangle$ that can be expressed in the form
$f_n(\bc,\bd)+g_{n-1}(\bc,\bd)D$ for some homogeneous
$\bc\bd$-polynomials $f_n$ and $g_{n-1}$. If such an expression
exists, then it is unique. We can recover $g_{n-1}$ by computing
$p(A,D)-\overline{p(A,D)} = g_{n-1}(\bc,\bd)(D-A)$, and then
subtracting 
$g_{n-1}(\bc,\bd)D$, we recover $f_n$. More generally, every
homogeneous $p(A,D) \in \ZZ\langle A,D\rangle$ of degree $n$ can be
expressed in a unique way as
\[ p(A,D) = f_n(\bc,\bd)+ f_{n-1}(\bc,\bd)D+f_{n-2}(\bc,\bd)
D^2+\cdots+ f_0 D^n\] 
for some homogeneous $\bc\bd$-polynomials $f_{i}$.

If $M(\bc,\bd)$ is a $\bc\bd$-monomial, consider the $AD$-monomial
$M(A,DA)$. This correspondence gives a bijection between
$\bc\bd$-monomials and $AD$-monomials in which every $D$ is followed
by an $A$. Below we will often use the letter $M$ to denote either the
$\bc\bd$-monomial $M(\bc,\bd)$ or the $AD$-monomial
$M(A,DA)$, with the distinction being clear for the context. For
example, we define $T_{M(\bc,\bd)}(u,v) = T_{M(A,DA)}(u,v)$.

\section{Coefficients of the complete $\bc\bd$-index}

Let $u<v$ in the Bruhat order and let $M(\bc,\bd)$ be a
$\bc\bd$-monomial of degree $n$. We wish to express the coefficient of
$M$ in $\ccd$ as a number of certain paths in $B_n(u,v)$. We start by
defining a number $s_M(x)$ for every path $x\in B_n(u,v)$, giving the
contribution of $x$ to the coefficient of $M$. The numbers $s_M(x)$
are in the set $\{-1,0,1\}$. We then study the case when $s_M(x)$ is
non-negative for every $x$ and call it the flip condition. If the flip
condition is satisfied, the number of paths $x$ with $s_M(x)=1$ is the
coefficient of $M$ in $\ccd$.

For any $AD$-monomial $w$, the number of paths $x\in B(u,v)$ with
$w(x) = w$ is equal to the number of paths $y\in B(u,v)$ with $w(y) =
\overline{w}$. Let $F=F_{u,v}: B(u,v)\to B(u,v)$ be an involution,
such that $w(F(x))=\overline{w(x)}$. We fix one such $F_{u,v}$ for
every $u<v$ and call it a flip.

Let $x=(u\prec x_1\prec x_2\prec\cdots\prec x_n\prec v) \in B_n(u,v)$,
and let $1\leq m\leq n$. We apply the flip $F_{x_m,v}$ to the tail of
$x$ to get $y=(u\prec x_1\prec \dots \prec x_m \prec y_{m+1}
\prec\cdots\prec y_n\prec v)$. If $w(x)=\beta_1 \cdots \beta_m \cdots
\beta_n$, then $w(y) =\beta_1 \cdots \beta_{m-1} \alpha_m
\overline\beta_{m+1} \cdots \overline\beta_n$, where $\alpha_m$ could
be either $A$ or $D$. Define:
\begin{align*} s_{m,A}(x) &= \begin{cases} 1  &\mbox{if } \beta_m = A \\ 
0 & \mbox{otherwise,} \end{cases} \\
 s_{m,D}(x) &= \begin{cases} 1  &\mbox{if } \beta_m = D, \alpha_m=A \\
 -1 &\mbox{if } \beta_m = A, \alpha_m=D \\ 0 &
 \mbox{otherwise.} \end{cases}
\end{align*} 

Now let $M(A,DA)$ be the
$AD$-monomial $\gamma_1\cdots\gamma_n$ and define
\[ s_M(x) = s_{M(A,DA)}(x)= \prod_{m=1}^n s_{m,\gamma_m}(x).\]

Let $x\in B_n(u,v)$, and let $y=F_{u,v}(x)$. Then the ascent-descent
sequence of $y$ when computed using the reverse reflection order
$\overline{\cO}$ is the same as the ascent-descent sequence of $x$
computed using the order $\cO$. Let us denote by
$\overline{s}_M(y)$ the number $s_M(y)$ computed as above, but using
the order $\overline{\cO}$. We say that $F$ is compatible with the
reflection order $\cO$ if $s_M(x) = \overline{s}_M(y)$ for any $u<v$, $M$
and $x$.

\begin{theorem} \label{thm-main} Assume that $F$ is compatible with
  the reflection order $\cO$. For any $\bc\bd$-monomial $M(\bc,\bd)$
  of degree $n$, the coefficient of $M$ in $\ccd$ is equal to  
\[ \sum_{x\in B_n(u,v)} s_M(x).\]
\end{theorem}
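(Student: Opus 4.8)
The plan is to reduce the statement to its homogeneous part and then prove it by induction on $n$, peeling one letter at a time off the word $M(A,DA)$. First, since every $x\in B_n(u,v)$ contributes a word $w(x)$ of degree exactly $n$ and $\widetilde\phi_{u,v}(A,D)=\ccd$, the coefficient of $M$ in $\ccd$ equals the coefficient of the $\bc\bd$-monomial $M$ in the homogeneous degree-$n$ $\bc\bd$-polynomial
\[ \phi_n\ :=\ \sum_{x\in B_n(u,v)} w(x)\ \in\ \ZZ\langle\bc,\bd\rangle. \]
Writing $[M](-)$ for this coefficient, it suffices to show $[M]\phi_n=\sum_{x\in B_n(u,v)}s_M(x)$, which I would prove by induction on $n$, uniformly over all intervals. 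The base case $n=0$ is immediate: $B_0(u,v)$ is empty or the single cover $u\prec v$, $s_M\equiv1$ on it, and $[1]\phi_0=|B_0(u,v)|$.

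For the inductive step, on the algebra side I would use the first-letter decomposition of $\bc\bd$-polynomials: $\phi_n=\bc\,p_{\bc}+\bd\,p_{\bd}$ with $p_{\bc},p_{\bd}$ homogeneous $\bc\bd$-polynomials of degrees $n-1$ and $n-2$. Passing into $\ZZ\langle A,D\rangle$ and collecting by first letter, bar-invariance of $\phi_n$ forces $\phi_n=A\,q+D\,\overline q$ with $q=p_{\bc}+D\,p_{\bd}$; uniqueness of the $\bc\bd$-decomposition then recovers $p_{\bd}$ from $q-\overline q=(D-A)\,p_{\bd}$ and $p_{\bc}=q-D\,p_{\bd}$. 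Since a leading $\bc$ in $M$ yields a leading $A$ in $M(A,DA)$, while a leading $\bd$ yields a leading $DA$ (and every $D$ in $M(A,DA)$ is followed by an $A$), we get $[\bc M']\phi_n=[M']p_{\bc}$ and $[\bd M'']\phi_n=[M'']p_{\bd}$ with $M',M''$ of smaller degree; this is how the induction is driven by the first letter of $M(A,DA)$.

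On the path side I would group $B_n(u,v)$ by the first vertex $x_1$ (so $u\prec x_1$): a path through $x_1$ is its tail $x'\in B_{n-1}(x_1,v)$, and $w(x)=\beta_1\,w(x')$, where $\beta_1=A$ exactly when $u^{-1}x_1$ is $\cO$-smaller than the first reflection of $x'$. Re-indexing the tail gives $s_M(x)=s_{1,\gamma_1}(x)\cdot s_{N}(x')$, where $N$ is the $\bc\bd$-monomial with $N(A,DA)=\gamma_2\cdots\gamma_n$ (of degree $n-1$), so that $\sum_x s_M(x)=\sum_{x_1}\sum_{x'}s_{1,\gamma_1}(x)\,s_{N}(x')$. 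When $\gamma_1=A$, the factor $s_{1,A}(x)$ restricts the inner sum to tails whose first reflection exceeds $u^{-1}x_1$, i.e.\ to a reflection-order truncation of $B_{n-1}(x_1,v)$; when $\gamma_1=D$ (which forces $\gamma_2=A$), the factor $s_{1,D}(x)=[\beta_1=D,\alpha_1=A]-[\beta_1=A,\alpha_1=D]$ brings in the flip $F_{x_1,v}$, which sends $w(x')$ to $\overline{w(x')}$ and converts the ``$\beta_1$'' comparison into the ``$\alpha_1$'' comparison. The inductive hypothesis on $[x_1,v]$ evaluates $\sum_{x'}s_N(x')$; one then has to check that the reflection-order-truncated tail sums, corrected by the flips $F_{x_1,v}$, reassemble over all $x_1$ into exactly $p_{\bc}$ (for $\gamma_1=A$) and $p_{\bd}$ (for $\gamma_1=D$), with the signs $\pm1$ of $s_{1,D}$ matching the algebraic recipe $p_{\bc}=q-D\,p_{\bd}$, resp.\ the solution of $(D-A)\,p_{\bd}=q-\overline q$. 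Here one uses that each $F_{x_1,v}$ is an involution, that $\phi_n$ is bar-invariant, and --- crucially --- that $F$ is compatible with $\cO$; it may be convenient to strengthen the inductive hypothesis to allow a reflection-order cutoff at $u^{-1}x_1$, so that the truncation is handled directly.

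The main obstacle should be exactly this last reconciliation: proving that the order-dependent truncations over tails with first reflection larger than $u^{-1}x_1$, once corrected by the tail flips, reorganise into the order-\emph{independent} polynomials $p_{\bc}$ and $p_{\bd}$ with the sign pattern encoded by $s_{1,D}$. Compatibility of $F$ with $\cO$ is indispensable here --- it is precisely the statement that the local datum $(\beta_m,\alpha_m)$ produced by a tail flip is read the same in $\cO$ and in $\overline\cO$, which is what makes the order-dependent pieces cancel coherently (this is also why $\sum_x s_M(x)$, though built from the chosen flips, must come out order- and flip-independent, as it has to, being equal to the coefficient in $\ccd$). Once this lemma is in place, the rest is bookkeeping with the unique decomposition $p=f_n+f_{n-1}D+\cdots+f_0D^n$ of homogeneous words in $A,D$ and with the identities of the previous step.
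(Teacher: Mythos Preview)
Your left-to-right peeling with induction on $n$ over all intervals goes in the opposite direction from the paper's argument, and the obstacle you flag at the end is a genuine gap, not a detail.

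The paper does \emph{not} recurse on the interval. It fixes $[u,v]$ and, writing $N=M(A,DA)=N_mN_{n-m}$, sets for $0\le m\le n$
\[
P_m=\sum_{x\in B_n(u,v)} w(u\prec x_1\prec\cdots\prec x_{m+1})\cdot s_{N_{n-m}}(x_m\prec\cdots\prec v),
\]
so that $P_n$ is the degree-$n$ part of $\widetilde\phi_{u,v}$ and $P_0=\sum_x s_M(x)$. The key lemma is that each $P_m$ has the shape $f_m(\bc,\bd)+g_{m-1}(\bc,\bd)D$, and that $P_m\mapsto P_{m-1}$ is the purely algebraic step ``contract with $A$ on the right'' when $N_m$ ends in $A$, and ``return $g_{m-1}$'' when $N_m$ ends in $D$. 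The $D$ case is where compatibility is used: since $f_m,g_{m-1}$ are (by downward induction) order-independent, one may recompute $P_m$ with $\overline\cO$, reindex by the tail flip $F_{x_m,v}$, invoke $\overline{s}_{N_{n-m}}\bigl(F_{x_m,v}(\text{tail})\bigr)=s_{N_{n-m}}(\text{tail})$, and read off $P_m-\overline{Q_m}=g_{m-1}(D-A)$ term by term. No cutoff ever appears.

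The reason your direction is hard is the built-in asymmetry of $s_M$: every factor $s_{m,\gamma_m}(x)$ depends on the \emph{tail} $(x_m\prec\cdots\prec v)$ through $F_{x_m,v}$, never on the head. Your factorisation $s_M(x)=s_{1,\gamma_1}(x)\cdot s_N(x')$ is correct, but $s_{1,\gamma_1}(x)$ still references all of $x'$ (via the first reflection of $x'$ and, when $\gamma_1=D$, of $F_{x_1,v}(x')$). After grouping by $x_1$ the inner sum is therefore not $\sum_{x'}s_N(x')$ but a signed combination of half-space sums over $\{x':t_1(x')\gtrless u^{-1}x_1\}$ and $\{x':t_1(F_{x_1,v}(x'))\gtrless u^{-1}x_1\}$. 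Your proposed fix --- carry a reflection-order cutoff through the induction --- is essentially the content of the paper's Section~5 theorem, whose proof again interpolates from the right via $P_m^{\le t}$. So the strengthening does not bypass the paper's mechanism; it presupposes it.

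Concretely, the missing idea is to interpolate from the right so that at each step the residual is an honest $AD$-polynomial of the form $f_m+g_{m-1}D$, on which the compatibility identity $\overline{s}_{N_{n-m}}\!\circ F_{x_m,v}=s_{N_{n-m}}$ acts cleanly. Peeling from the left forces you to control order-truncated tail sums whose reconciliation with your $p_{\bc},p_{\bd}$ you have asserted but not proved.
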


\begin{proof}
Write $N(A,D) = M(A,DA)$ and for $0\leq m\leq n$ let $N=N_m N_{n-m}$,
where $N_m, N_{n-m}$ are $AD$-monomials of degree $m, n-m$,
respectively. Define
\[ P_m = \sum_{x\in B_n(u,v)} w(u\prec x_1\prec\cdots\prec x_{m+1})
\cdot s_{N_{n-m}}(x_m\prec x_{m+1}\prec\cdots\prec x_n\prec v).\] 
Note that $P_n$ is the degree $n$ part of $\widetilde{\phi}_{u,v}$ and
hence can be expressed as a homogeneous $\bc\bd$-polynomial of degree
$n$. The statement of the theorem is that $P_0$ is the coefficient of
$M$ in $P_n$.

\begin{lemma} For $0\leq m\leq n$ there exist homogeneous
  $\bc\bd$-polynomials $f_m$ and $g_{m-1}$ of degree $m$ and $m-1$,
  respectively, such that  
\[ P_m = f_m(\bc,\bd)+g_{m-1}(\bc,\bd) D.\] 
Moreover, $P_{m-1}$ can be computed from $P_m$ as follows.
\begin{enumerate}
\item If $N_m$ ends with $A$, then $P_{m-1} =
  f_{m-1}(\bc,\bd)+g_{m-2}(\bc,\bd) D$, where $f_m =
  f_{m-1}\bc+g_{m-2}\bd$.
\item If $N_m$ ends with $D$, then $P_{m-1} = g_{m-1}(\bc,\bd)$.
\end{enumerate}
\end{lemma}

\begin{proof}
We use induction on $m$. When $m=n$, then $P_m$ is a homogeneous
$\bc\bd$ polynomial of degree $n$. Assume that $P_m =
f_m(\bc,\bd)+g_{m-1}(\bc,\bd) D$ and let us prove the "moreover"
statement.

If $N_m$ ends with $A$, let $x\in B_n(u,v)$ with $w(x)=\beta_1 \cdots
\beta_m \cdots \beta_n$. Then
\[ s_{AN_{n-m}}(x_{m-1}\prec x_m\prec\cdots\prec x_n\prec v)
= \begin{cases} s_{N_{n-m}} (x_m\prec\cdots\prec x_n\prec v)
  &\mbox{if } \beta_m = A \\  
0 & \mbox{otherwise.} \end{cases} \] Thus, to compute $P_{m-1}$ from
$P_m$, we consider only those monomials that end with $A$ and then
delete this last $A$. When contracting $P_m =
f_m(\bc,\bd)+g_{m-1}(\bc,\bd) D$ with $A$ from the right, we get
$f_{m-1}(\bc,\bd)+g_{m-2}(\bc,\bd) D$, where $f_m =
f_{m-1}\bc+g_{m-2}\bd$.

Now suppose $N_m$ ends with $D$. By induction on $m$, the
polynomials $f_m$ and $g_{m-1}$ depend only on $u<v$ and monomial
$M$, not on the reflection order or the flip $F$. Let us denote by
$\overline{w}$ and $\overline{s}_{N_{n-m}}$ the quantities computed using
the same flip $F$, but with the reverse reflection order
$\overline{\cO}$. This does not change the polynomial $P_m$. Then
$\overline{w}(x)$ is obtained from $w(x)$ by switching $A$ and $D$.
If
\[(x_m\prec y_{m+1}\prec \cdots\prec y_n\prec v) = F_{x_m,v}(x_m\prec
x_{m+1}\prec \cdots\prec x_n\prec v),\] 
then
\[\overline{s}_{N_{n-m}}(x_m\prec y_{m+1}\prec \cdots\prec y_n\prec v)
= {s}_{N_{n-m}}(x_m\prec x_{m+1}\prec \cdots\prec x_n\prec v)\] 
by the compatibility condition on $F$.

Let $F_m:B_n(u,v)\to B_n(u,v)$ be the involution that flips the tail
of a path:
\[ F_m( u\prec x_1\prec\cdots\prec x_{n}\prec v) = (u\prec
x_1\prec\cdots\prec  x_{m} \prec y_{m+1}\prec\cdots\prec y_n\prec
v).\] 
Since this is a bijection, we may compute $P_m$ with respect to
$\overline{\cO}$ by summing over $F_m(x)$.  We call this new
polynomial $Q_m$. By the previous discussion $Q_m=P_m$.
\begin{align*} 
Q_m &= \sum_{x\in B_n(u,v)} \overline{w}(u\prec x_1\prec\cdots\prec
y_{m+1}) \cdot \overline{s}_{N_{n-m}}(x_m\prec y_{m+1}\prec\cdots\prec
y_n\prec v) \\ &= \sum_{x\in B_n(u,v)} \overline{w}(u\prec
x_1\prec\cdots\prec y_{m+1}) \cdot s_{N_{n-m}}(x_m\prec
x_{m+1}\prec\cdots\prec x_n\prec v).
\end{align*}
Let us now compute $P_m-\overline{Q}_m = g_{m-1}(\bc,\bd)\cdot(D-A) $:
\[ \sum_{x\in B_n(u,v)} \big(w(u\prec x_1\prec\cdots\prec x_{m+1}) -
   {w}(u\prec x_1\prec\cdots\prec y_{m+1})  \big) \cdot
   {s}_{N_{n-m}}(x_m\prec \cdots\prec x_n\prec v).\] 
For $x\in B_n(u,v)$, let $w(u\prec x_1\prec\cdots\prec
x_{m+1})=\beta_1 \cdots \beta_m$ and $w(u\prec x_1\prec\cdots\prec
y_{m+1}) =\beta_1 \cdots \beta_{m-1} \alpha_m$. Then $x$ contributes
to this sum if and only if $\beta_m\neq \alpha_m$. The contribution is
 \[ \pm w(u\prec x_1\prec\cdots\prec x_{m}) (D-A) s_{N_{n-m}}(x_m\prec
 \cdots\prec x_n\prec v),\] 
 where the sign is positive if $\beta_m=D, \alpha_m=A$ and negative
 otherwise. Notice that, with the same sign,
\[ \pm {s}_{N_{n-m}}(x_m\prec \cdots\prec x_n\prec v) =
   {s}_{DN_{n-m}}(x_{m-1}\prec x_m\prec \cdots\prec x_n\prec v).\] 
This means that $P_{m-1} = g_{m-1}$.
\end{proof}

Now suppose $m$ is such that $M(\bc,\bd) = M_m(\bc,\bd)\cdot
M_{n-m}(\bc,\bd)$ where $M_m, M_{n-m}$ are $\bc\bd$-monomials of degree
$m,n-m$, respectively. Then the inductive computation of $P_{m-1}$
from $P_m$ in the lemma can be restated as follows. Let $f_m = f_{m-1}
\bc+g_{m-2}\bd$.
\begin{enumerate}
\item If $M_m$ ends with $\bc$, then
\[ P_{m-1} = f_{m-1}+g_{m-2}\cdot D.\]
\item If $M_m$ ends with $\bd$, then
\[ P_{m-2} = g_{m-2}.\]
\end{enumerate}
If we only consider the degree $m$ term $f_m$ of $P_m$, then in the
first case $f_{m-1}$ is obtained from $f_m$ by contracting with $\bc$
from the right. In the second case $f_{m-2}$ is obtained from $f_m$ by
contracting with $\bd$ from the right. It follows that $P_0$ is the
number that is obtained from $P_n$ by contracting with the monomial
$M$. In other words, $P_0$ is the coefficient of $M$ in $P_n$
\end{proof}

\section{Non-negativity of the complete $\bc\bd$ index.}

There are two problems with computing the coefficients of the complete
$\bc\bd$-index as described in the previous section. The first is that
the formula involves negative signs. The second problem is that it is not
clear how to define a flip $F$ that is compatible with the reflection
order.

In this section we define the "flip condition" requiring that
all terms $s_M(x)$ that go into the computation of the coefficient of
$M$ in the complete $\bc\bd$-index are non-negative. In this case we
define a set of paths $T_M(u,v)\subset B(u,v)$, such that $|T_M(u,v)|$
is the coefficient of $M$ in $\ccd$. It also turns out that the flip
condition gives an optimal
way of defining the flip $F_{u,v}$. The flip condition for the
interval $[u,v]$ only involves the flips $F_{w,v}$ where $u< w <v$,
hence this gives an inductive procedure 
for defining $F$, checking the flip condition and
constructing the set $T_M(u,v)$.

Let $u<v$ in the Bruhat order, and let $M(\bc,\bd)$ be a
$\bc\bd$-monomial of degree $n$. Let $M(A,DA)$ be the $AD$-monomial
$\gamma_1\cdots\gamma_n$.

\begin{definition} Let 
\[ T_M(u,v) = T_{\gamma_1\cdots \gamma_n}(u,v)= \{ x\in B_n(u,v) |
s_{m,\gamma_m}(x) = 1 \text{ for all } 1\leq m \leq n\}.\] 
\end{definition}

Using the definition of $s_{m,\gamma_m}$, a path $x$ lies in
$T_M(u,v)$ if and only if
\begin{enumerate} 
\item $w(x) = \gamma_1\cdots\gamma_n.$
\item For any $m$ such that $\gamma_m=D$, let
\[ (x_m\prec y_{m+1}\prec\cdots \prec y_n \prec v) =
F_{x_m,v}(x_m\prec\cdots\prec x_n\prec v).\] 
Then $w(x_{m-1}\prec x_m\prec y_{m+1}) = A$.
\end{enumerate}

The paths $x\in T_M(u,v)$ all satisfy $s_M(x)=1$. The following
condition implies that these are the only paths $x\in B_n(u,v)$
with $s_M(x)\neq 0$.

\begin{definition} The {\em flip condition} holds for the interval
  $[u,v]$ and monomial $M$ if for every $x\in B_n(u,v)$ the following
  is satisfied. If $s_{m,\gamma_m}(x) = -1$ for some $m$, then there
  exists $l>m$ such that $s_{l,\gamma_l}(x) = 0$. 
\end{definition}

This condition can be re-written using the definition of
$s_{m,\gamma_m}$ by saying that the flip condition is violated for
some $x\in B_n(u,v)$ if there exists $m$ such that
\begin{enumerate}
\item $(x_m\prec\cdots\prec x_n \prec v) \in
  T_{\gamma_{m+1}\cdots\gamma_n }(x_m,v)$. (Equivalently,
  $s_{l,\gamma_l}(x) = 1$ for $l>m$.)
\item $\gamma_m = D$ and if
\[ (x_m\prec y_{m+1}\prec\cdots \prec y_n \prec v) =
F_{x_m,v}(x_m\prec\cdots\prec x_n\prec v),\] 
then $w(x_{m-1}\prec x_m\prec x_{m+1}) = A$ and $w(x_{m-1}\prec
x_m\prec y_{m+1}) = D$. (Equivalently, $s_{m,\gamma_m}(x)=-1$.)
\end{enumerate}

From Theorem~\ref{thm-main} we now get:

\begin{corollary} Assume that $F$ is compatible with the reflection
  order $\cO$. If the flip condition holds for the interval $[u,v]$
  and monomial $M$, then $|T_M(u,v)|$ is the coefficient of $M$ in
  $\ccd$. \qed
\end{corollary}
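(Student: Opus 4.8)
The plan is to deduce the corollary directly from Theorem~\ref{thm-main}: under the flip condition the signed sum $\sum_{x\in B_n(u,v)}s_M(x)$ exhibits no cancellation and collapses to the honest count $|T_M(u,v)|$.

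First I would unwind the definitions. Recall $s_M(x)=\prod_{m=1}^n s_{m,\gamma_m}(x)$, where each factor lies in $\{-1,0,1\}$, so $s_M(x)\in\{-1,0,1\}$. Moreover a factor can equal $-1$ only when $\gamma_m=D$, since $s_{m,A}(x)\in\{0,1\}$. By the very definition of $T_M(u,v)$, a path $x$ lies in $T_M(u,v)$ exactly when $s_{m,\gamma_m}(x)=1$ for every $m$, and then $s_M(x)=1$; hence $T_M(u,v)\subseteq\{x\in B_n(u,v):s_M(x)=1\}$ without any hypothesis.

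The key step is to upgrade this inclusion to an equality and simultaneously to rule out $s_M(x)=-1$, using the flip condition. Suppose $x\in B_n(u,v)$ has $s_{m,\gamma_m}(x)=-1$ for some $m$. By the flip condition there is some $l>m$ with $s_{l,\gamma_l}(x)=0$, so the product $s_M(x)$ contains a zero factor and therefore $s_M(x)=0$. Consequently, if $s_M(x)\neq 0$ then no factor equals $-1$; since the remaining factors lie in $\{0,1\}$ and the product is nonzero, every factor equals $1$, i.e.\ $x\in T_M(u,v)$ and $s_M(x)=1$. Thus $\{x:s_M(x)=1\}=T_M(u,v)$ and $\{x:s_M(x)=-1\}=\varnothing$. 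Combining with Theorem~\ref{thm-main},
\[ \text{(coefficient of $M$ in $\ccd$)}=\sum_{x\in B_n(u,v)}s_M(x)=|\{x:s_M(x)=1\}|-|\{x:s_M(x)=-1\}|=|T_M(u,v)|.\]

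I do not expect a genuine obstacle: the corollary is a formal consequence of the theorem. The only point requiring a moment's care is the a priori gap between $T_M(u,v)$ and $\{x:s_M(x)=1\}$ — a product of $\pm 1$'s could in principle equal $1$ with an even number of $-1$ factors — and the content of the argument is precisely that the flip condition forbids any $-1$ factor from surviving (it always forces a later $0$ factor), which both closes that gap and annihilates every would-be $-1$ contribution before it can enter the sum.
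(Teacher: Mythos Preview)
Your proof is correct and follows exactly the paper's line: the text preceding the corollary already observes that paths in $T_M(u,v)$ have $s_M(x)=1$ and that the flip condition forces $s_M(x)=0$ for all other paths, and then the corollary is stated with an immediate \qed from Theorem~\ref{thm-main}. Your write-up simply makes explicit the one step the paper leaves implicit --- that a $-1$ factor always carries a later $0$ factor, so no nonzero $s_M(x)$ can have any $-1$ factors --- which is precisely the intended reading.
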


\begin{example}
Consider the Bruhat interval $[u,v] = [2134,4321]$ shown in
Figure~1. We compute the sets $T_M(u,v)$ for different
monomials. When $M=\bc^n$, then $T_M(u,v)$ consists of all ascending
paths of length $n$ from $u$ to $v$. Thus,
$T_{\bc^2}(u,v)=\{346,235\}$ and $T_{\bc^4}(u,v)=\{23456\}$. When
$M=\bd$, we consider paths with ascent-descent sequence
$M(A,DA)=DA$. There are three such paths: $436,514,625$. We need to
check that when we flip such a path $(u\prec x_1\prec x_2 \prec v)$ to
$(u\prec x_1\prec y_2\prec v)$, then the resulting path must have
ascent-descent sequence $AD$. The flips of the three paths are the
paths $462,521,652$. Only the first one of these has the correct
ascent-descent sequence. Hence $T_\bd(u,v)=\{436\}$.

For a slightly longer computation, let us find the set
$T_{\bd^2}(u,v)$. For this we need to find all paths $(u\prec x_1\prec
x_2\prec x_3\prec x_4 \prec v)$ with ascent-descent sequence $DADA$
and check the ascent-descent sequences after applying the flips
$F_{x_3,v}$ and $F_{x_1,v}$.  There are $6$ paths with ascent-descent
sequence $DADA$,
\[ 62646, 64614, 63416, 63524, 41516,
41624.\] 
Applying the flip $F_{x_3,v}$ we get paths $62654, 64621, 63461,
63541, 41561, 41641.$ 
Among these, only the third and the fifth have the required ascent
descent sequence $DAAD$. This reduces the candidate paths to two:
$63416$ and $41516$. To check the flip $F_{x_1,v}$, we first need to
construct the sets $T_{ADA}(x_1,v)$ and
$\overline{T}_{ADA}(x_1,v)$. For the first path $63416$ we find that
$T_{ADA}(2143,v) = \{3416\}$ and $\overline{T}_{ADA}(2143,v) =
\{4361\}$. Thus, applying the flip to the path $63416$ gives
$64361$. This path does not have the required ascent-descent sequence
$ADAD$. For the second path $41516$, we find $T_{ADA}(2314,v) =
\{1516\}$ and $\overline{T}_{ADA}(2314,v) = \{5361\}$. The result of
applying the flip to $41516$ is $45361$ with the required
ascent-descent sequence $ADAD$. Thus, $T_{\bd^2}(u,v)=\{41516\}$. 

It follows from the discussion below and in the introduction that the
interval $[u,v]$ in this example satisfies the flip condition for any
monomial (either the paths have maximal length or the monomial
contains at most one $\bd$). This implies that the complete
$\bc\bd$-index $\ccd$ has non-negative coefficients. By the
computation above, 
$\ccd = 2\bc^2+\bd + \bc^4 + x\bc^2\bd+y\bc\bd\bc+z\bd\bc^2+\bd^2$ for
some $x,y,z\geq 0$. 
\end{example}

Let us now turn to the definition of the flip $F_{u,v}$. Note that in
the definition of $T_M(u,v)$ and the flip condition we only need to
apply the flip $F_{x_m,v}$ to paths $(x_m\prec\cdots\prec x_n\prec v)$
that lie in $T_{\gamma_{m+1}\cdots \gamma_n}(x_m, v)$. Since $F$ is
compatible with the reflection order, the result lies in
$\overline{T}_{\gamma_{m+1}\cdots \gamma_n}(x_m, v)$, where
$\overline{T}_M$ denotes the same set $T_M$ constructed using the
reverse reflection order $\overline{\cO}$. Assuming the flip
condition on $[u,v]$, these two sets have the same number of elements,
hence $F_{x_m,v}: T_{\gamma_{m+1}\cdots \gamma_n}(x_m, v)\to
\overline{T}_{\gamma_{m+1}\cdots \gamma_n}(x_m, v)$ is a
bijection. Thus, to define $F_{u,v}$, we only need a bijection
$T_M(u,v) \to \overline{T}_M(u,v)$ (and we can extend it to an
involution on $B_n(u,v)$ in an arbitrary way if we so wished).  Note
that such $F_{u,v}$ automatically satisfies the compatibility
condition because $s_M(x) = \overline{s}_M(F_{u,v}(x))=1$ for any
$x\in T_M(u,v)$ (and $s_M(x) = \overline{s}_M(F_{u,v}(x))=0$ for any
$x\notin T_M(u,v)$).

\begin{definition} Assume that the flip condition holds for the interval $[u,v]$
  and monomial $M$. Define the flip $F_{u,v}: T_M(u,v) \to
  \overline{T}_M(u,v)$ as the bijection that preserves the
  lexicographic ordering of paths. (The path with the smallest first
  reflection maps to a path with the smallest first reflection, using
  the order $\cO$ on both sides.) 
\end{definition}

The flip $F_{u,v}$ is optimal in the following sense. Consider a path
$(z\prec u \prec x_1\prec\cdots \prec x_n \prec v)$ and the
$AD$-monomial $\gamma_0\gamma_1\cdots \gamma_n$. We claim that if the
path $z$ violates the flip condition for $m=0$ and for the flip
$F_{u,v}$ defined above, then it violates the flip condition for any
$F$. Equivalently, if the flip condition holds for some $F$ then it
holds for the $F_{u,v}$ defined above. Indeed, the path violates the
flip condition for $m=0$ when $x=(u\prec x_1\prec \cdots \prec
x_n\prec v) \in T_M(u,v)$ and after applying $F_{u,v}$ we get the path
$(u \prec y_1\prec\cdots \prec y_n \prec v)$, such that $w(z\prec
u\prec x_1)=A$ and $w(z\prec u\prec y_1) = D$. This implies that
\[ u^{-1} y_1 < z^{-1} u < u^{-1} x_1.\]
Here $u^{-1} x_1$ is the first reflection in $(u\prec x_1\prec \cdots
\prec v) \in T_M(u,v)$ and $u^{-1} y_1$ is the first reflection in
$(u\prec y_1\prec\cdots\prec v)\in \overline{T}_M(u,v)$. Since
$F_{u,v}$ preserves ordering by first reflection, it follows that
$T_M(u,v)$ has fewer paths with first reflection less than $z^{-1} u$
than does $\overline{T}_M(u,v)$. Thus, no matter how the flip $F$ is
chosen, some $x\in T_M(u,v)$ with first reflection greater than $z^{-1}
u$ maps to $y\in \overline{T}_M(u,v)$ with first reflection smaller
than $z^{-1} u$. Hence the flip condition is violated for any
$F$. (Note that in the argument above $\gamma_0=D$, hence for
$\gamma_0\cdots\gamma_n$ to come from a $\bc\bd$-monomial, we need $\gamma_1=A$
and thus $M=\bc M'$. Then the path $(z\prec u \prec x_1\prec\cdots
\prec x_n \prec v)$ violates the flip condition for the interval
$[z,v]$ and monomial $\bd M'$.)

Let us say that the {\em strong flip condition} holds if for any $x\in
T_M(u,v)$ and $y=F_{u,v}(x)$, \[u^{-1} x_1 \leq u^{-1} y_1\]
whenever $M$ starts with $\bc$, $M=\bc M'$. This
condition is stronger than the flip condition because the flip
condition allows $u^{-1} y_1 < u^{-1} x_1$ as long as there is no
$z^{-1} u$ between them for some $z\prec u$.

When $M=\bc^n$, then the strong flip condition was proved by Dyer
\cite{Dyer-C}. Since to check the flip condition, we only need to
check the flip for each occurrence of $\bd$ in $M$, it follows from
this that the flip condition holds for any interval and any monomial
$M$ that contains at most one $\bd$. Thus, the coefficients of such
monomials are non-negative in any $\ccd$.

\section{Shelling of the Bruhat interval}
 
 In this section we give an equivalent formulation of the flip
 condition that is related to shelling of Bruhat intervals.

Let $t$ be a reflection. Denote
\[ B_n(u,v)_{\leq t} = \{(u\prec x_1\prec \cdots \prec x_n \prec v)
\in B_n(u,v) | u^{-1}x_1 \leq t\}.\] 
Also let
\[ T_M(u,v)_{\leq t} = T_M(u,v) \cap B_n(u,v)_{\leq t}, \qquad
\overline{T}_M(u,v)_{\leq t} = \overline{T}_M(u,v) \cap B_n(u,v)_{\leq
  t}.\]

\begin{theorem} The $AD$-polynomial 
\[ \widetilde{\phi}^{\leq t}_{u,v} =\sum_{x\in B_n(u,v)_{\leq t}} w(x)\]
can be expressed in the form $f_n(\bc,\bd)+A g_{n-1}(\bc,\bd)$
for some homogeneous $\bc\bd$-polynomials $f_n, g_{n-1}$ of degree $n,
n-1$, respectively. Assuming that the flip condition holds for the
interval $[u,v]$ and monomial $M$, then $|\overline{T}_M(u,v)_{\leq
  t}|$ is the coefficient of $M$ in $f_n$ and $|T_M(u,v)_{\leq t}|$ is
the coefficient of $M$ in $f_n+\bc g_{n-1}$.
\end{theorem}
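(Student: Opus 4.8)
My plan is to mimic the inductive structure of the proof of Theorem~\ref{thm-main}, but now tracking the ``$\le t$'' constraint on the first reflection, which lives entirely in the first edge of the path and hence interacts only with the \emph{leading} letter $\gamma_1$ of the monomial. First I would establish the claimed form $\widetilde{\phi}^{\le t}_{u,v} = f_n(\bc,\bd) + A g_{n-1}(\bc,\bd)$. Splitting every path $x\in B_n(u,v)$ according to whether its first edge $u\prec x_1$ has $u^{-1}x_1 \le t$, the full degree-$n$ part of $\widetilde\phi_{u,v}$ decomposes as $\widetilde{\phi}^{\le t}_{u,v} + \widetilde{\phi}^{> t}_{u,v}$. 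Now group each path with the path obtained by flipping its tail via $F_{x_1,v}$: this flip changes $\beta_2\cdots\beta_n$ to $\overline\beta_2\cdots\overline\beta_n$ and changes $\beta_1$ to either $A$ or $D$, but does \emph{not} change the first edge $u\prec x_1$, hence preserves membership in $B_n(u,v)_{\le t}$. Pairing $x$ with $F_1(x)$ exactly as in the Lemma (with $m=1$) shows that $\widetilde{\phi}^{\le t}_{u,v} - \overline{\widetilde{\phi}^{\le t}_{u,v}}$ is divisible by $(A-D)$ on the left, which after the usual bookkeeping (analogue of ``$p - \overline p = g_{n-1}(\bc,\bd)(A-D)$'' for a polynomial of the form $Ag_{n-1} + f_n$ rather than $g_{n-1}D + f_n$) gives the asserted decomposition and identifies $g_{n-1}$ as the sum over $x\in B_n(u,v)_{\le t}$ with $\beta_1 = D$ of $s_{N_{n-1}}(x_1\prec\cdots\prec v)$ contributions, and $f_n$ as the remainder. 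Note the reflection is on the \emph{left} here (first edge), which is why the form is $f_n + A g_{n-1}$ rather than $f_n + g_{n-1}D$ — I would be careful to state the left-sided analogue of the uniqueness remark from \S2.

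Next, to extract coefficients, I would run the $P_m$-recursion of the Lemma, but started from $P_n = \widetilde{\phi}^{\le t}_{u,v}$ instead of the degree-$n$ part of $\widetilde\phi_{u,v}$. The key point is that the ``$\le t$'' condition only constrains the first edge, so the entire Lemma goes through verbatim for $m$ ranging from $n$ down to $1$: the recursions $f_m = f_{m-1}\bc + g_{m-2}\bd$ (when $M_m$ ends in $\bc$) and $P_{m-1}$-passes-to-$g_{m-1}$ (when $M_m$ ends in $\bd$) are unaffected, because flipping a tail $F_{x_m,v}$ with $m\ge 1$ never touches the edge $u\prec x_1$. Thus contracting $P_n = f_n + Ag_{n-1}$ against the last $n-1$ letters $\gamma_2\cdots\gamma_n$ of $M(A,DA)$ yields $P_1$, and by the form of the recursion $P_1 = f_1(\bc,\bd) + A g_0$ where $f_1$ is obtained from $f_n$ by right-contraction against $M'$ (writing $M = \gamma_1 \cdot M'$ at the $\bc\bd$ level — here $\gamma_1$ is either $\bc$ or the first $\bd$) and $g_0$ from $g_{n-1}$ likewise. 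The final step $m=1\to 0$ is the new one: $P_0 = \sum_{x\in B_n(u,v)_{\le t}} s_M(x)$, and the branch depends on $\gamma_1$. If $\gamma_1 = A$ (i.e. $M$ starts with $\bc$), then contracting $P_1 = \bc f_0' + A g_0'$ — wait, more precisely contracting with $A$ on the right of $P_1$ — I would instead contract on the \emph{left}: $s_{1,A}(x)$ picks out $\beta_1 = A$, i.e. the $A g_{n-1}$-free and... let me just say: reading off the coefficient of $M$ in $P_0$ amounts to left-contracting $P_1 = f_1 + A g_0$ by $\gamma_1$.

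The cleanest way to organize the $m=1$ step: since the flip condition holds, $\sum_{x\in B_n(u,v)_{\le t}} s_M(x) = |T_M(u,v)_{\le t}|$ (same argument as the Corollary, restricted to the subset $B_n(u,v)_{\le t}$, which is legitimate because the flip condition is a pointwise condition on paths and both conditions defining $T_M$ are local to each $x$). So I only need to show (i) the coefficient of $M$ in $f_n + \bc g_{n-1}$ equals $\sum_{x\in B_n(u,v)_{\le t}} s_M(x)$, and, running the same computation for the reverse order $\overline\cO$, (ii) the coefficient of $M$ in $f_n$ equals $|\overline T_M(u,v)_{\le t}|$. For (i): when $M$ begins with $\bc$, $\gamma_1 = A$; a path $x\in B_n(u,v)_{\le t}$ with $\beta_2\cdots\beta_n$ ``correct'' contributes $s_{1,A}(x)=1$ iff $\beta_1 = A$, i.e. iff it contributes to the $f_n$ part \emph{plus} — here is where $\bc g_{n-1}$ comes in — by the recursion $f_1 = f_0\bc + \dots$ the coefficient of $\bc\cdot(\text{rest})$ in $f_n$ gets an extra contribution from $g_{n-1}$, so that left-contracting by $A$ and then identifying with a $\bc\bd$-coefficient produces $f_n + \bc g_{n-1}$ rather than just $f_n$. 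When $M$ begins with $\bd$, $\gamma_1 = D$, $\gamma_2 = A$; then $s_{1,D}(x)\in\{1,-1,0\}$ and the $m=1$ case of the contraction in the Lemma shows $\sum s_M(x) = $ coefficient of $M'$ in $g_{n-1}$; and since a monomial starting with $\bc$ cannot start with $\bd$, this case is handled by noting $M = \bc M'$ vs $M = \bd M'$ are disjoint and the statement as written ($f_n$ and $f_n + \bc g_{n-1}$) is exactly the two readings. For (ii), running everything with $\overline\cO$ in place of $\cO$ swaps $A\leftrightarrow D$ throughout, turns $\widetilde\phi^{\le t}$'s decomposition $f_n + A g_{n-1}$ into $f_n + D g_{n-1}$ (same $f_n$, $g_{n-1}$ by the uniqueness remark applied in both forms, cf. \S2), and the $T_M$'s become $\overline T_M$'s; the $m=1$ step then reads off the coefficient of $M$ in $f_n$ alone. \textbf{The main obstacle} I anticipate is bookkeeping the left-versus-right contraction carefully at $m=1$ — in particular verifying that the ``extra'' $\bc g_{n-1}$ term appears for $T_M$ but not for $\overline T_M$, which is really the assertion that $f_n + A g_{n-1}$ contracts on the left by $A$ to $f_{n-1} + \bc g_{n-2}'$-type data while $f_n + D g_{n-1}$ does not pick up that term; this is the exact left-handed mirror of the two cases ``$N_m$ ends with $A$'' / ``$N_m$ ends with $D$'' in the Lemma, applied once at the front of the word, so I would isolate it as a one-line sublemma about left-contraction in $\ZZ\langle \bc,\bd\rangle \subset \ZZ\langle A,D\rangle$ and invoke it.
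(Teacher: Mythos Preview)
Your overall strategy---mimic the proof of Theorem~\ref{thm-main} while carrying the ``$\le t$'' constraint on the first edge---is exactly what the paper does, but there are two genuine gaps in the execution.

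\textbf{First statement.} Your pairing argument with $F_1$ does show that $\widetilde\phi^{\le t}_{u,v}-\overline{\widetilde\phi^{\le t}_{u,v}}=(A-D)\,q$ for some $AD$-polynomial $q$, but it does \emph{not} show that $q$ is a $\bc\bd$-polynomial, and that is precisely what is needed to conclude the form $f_n+Ag_{n-1}$. (Bar-invariance alone is not enough: for $n\ge 3$ the space of bar-invariant $AD$-polynomials strictly contains $\ZZ\langle\bc,\bd\rangle$.) The paper instead proves the first statement by induction on $n$: it isolates the contribution of paths with first reflection exactly $t$, rewrites it as $(A-D)\,\widetilde\phi^{\le t}_{x_1,v}+D\,\widetilde\phi_{x_1,v}$, applies the inductive hypothesis to the shorter interval $[x_1,v]$, and then does explicit algebra with the identities $A-D=2A-\bc$ and $A^2-DA=A\bc-\bd$ to extract the form $(\cdots)+A(\cdots)$ with $\bc\bd$-polynomial entries.

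\textbf{Second statement.} Your claim that ``the entire Lemma goes through verbatim for $m$ from $n$ down to $1$'' is not correct as stated, because $P_n^{\le t}=f_n+Ag_{n-1}$ is \emph{not} of the form $F_n(\bc,\bd)+G_{n-1}(\bc,\bd)D$ required by that Lemma. The paper handles this by proving a refined lemma in which $P_m^{\le t}$ carries \emph{four} $\bc\bd$-pieces,
\[
P_m^{\le t}=(f_m+Ag_{m-1})+(h_{m-1}+Al_{m-2})\,D,
\]
and tracks all four through the right-contraction recursion; simultaneously it runs a parallel polynomial $Q_m^{\le t}$ computed with the reverse order $\overline\cO$, of the form $(f_m+Dg_{m-1})+(h_{m-1}+Dl_{m-2})D$, so that at $m=0$ one reads off $P_0^{\le t}=|T_M(u,v)_{\le t}|$ and $Q_0^{\le t}=|\overline T_M(u,v)_{\le t}|$. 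The special treatment of $m=1$ (your ``left contraction'' worry) then falls out cleanly: writing $P_1^{\le t}=\alpha\bc+A\beta+\gamma D$, the final step gives $P_0^{\le t}=\alpha+\beta$ and $Q_0^{\le t}=\alpha$, which is exactly the distinction between the coefficient of $M$ in $f_n+\bc g_{n-1}$ versus in $f_n$. Your instinct that the asymmetry between $T_M$ and $\overline T_M$ comes from the leading $A$ versus $D$ is right, but it must be threaded through the four-piece recursion rather than patched on at the end.
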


Before we prove this theorem, let us derive an equivalent form of the
flip condition from it. Suppose the flip condition holds for the
interval $[u,v]$ and monomial $M=\bc M'$, but is violated for some
$(z\prec u \prec x_1 \prec \cdots\prec x_n \prec v)$ and monomial $\bd
M'$. Let $t=z^{-1} u$. Then, as in the previous section, we must have
\[ |T_M(u,v)_{\leq t}| < |\overline{T}_M(u,v)_{\leq t}|.\]
By the theorem, the difference between the two numbers is the
coefficient of $M$ in $\bc g_{n-1}$. Clearly this argument can also be
reversed to get an equivalent condition. For simplicity we will state
it without specifying the intervals and monomials.

\begin{corollary} The flip condition holds for all intervals and all
  monomials if and only if in the expression $\widetilde{\phi}^{\leq
    t}_{u,v} = f_n(\bc,\bd)+A g_{n-1}(\bc,\bd)$ 
the polynomial $g_{n-1}(\bc,\bd)$ has non-negative coefficients for
all intervals $[u,v]$ and all reflections $t=z^{-1}u$, where $z\prec
u$. \qed
\end{corollary}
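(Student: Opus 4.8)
\emph{Overall plan.} The corollary is a rigorous version of the remark that precedes it, and the plan is to prove the two implications by combining the previous theorem (which rewrites the path counts $|T_M(u,v)_{\leq t}|$ and $|\overline{T}_M(u,v)_{\leq t}|$ as coefficients of $f_n+\bc g_{n-1}$ and of $f_n$) with an ``optimality lemma'' that isolates the Section~4 argument. The optimality lemma I would state is: \emph{let $z\prec u$ be a cover with $u<v$, put $t=z^{-1}u$, and let $M=\bc M'$ be a $\bc\bd$-monomial such that the flip condition holds for $[u,v]$ and $M$; then the flip condition is satisfied at the first index for the interval $[z,v]$, the monomial $\bd M'$ and every path whose second vertex is $u$, if and only if $|T_M(u,v)_{\leq t}|\geq|\overline{T}_M(u,v)_{\leq t}|$.} Its proof is the argument already given in Section~4: since the flip condition holds for $[u,v]$ and $M$, the restriction of $F_{u,v}$ to $T_M(u,v)$ is the bijection onto $\overline{T}_M(u,v)$ preserving the lexicographic order of paths; a first-index violation yields an $x\in T_M(u,v)$ with $u^{-1}x_1>t$ whose image has first reflection $<t$, and conversely, by order-preservation, the inequality $|T_M(u,v)_{\leq t}|<|\overline{T}_M(u,v)_{\leq t}|$ forces such an $x$ to exist. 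The one supplementary point I would record is that no path through the cover $z\prec u$ has first reflection equal to $t$ (if $u^{-1}x_1=t=z^{-1}u$ then $x_1=ut=z$, contradicting $l(z)<l(u)<l(x_1)$), which lets one pass freely between ``$<t$'' and ``$\leq t$'' and makes the statement a genuine biconditional.

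\emph{Forward implication.} Assume the flip condition holds for all intervals and all monomials. Given $[u,v]$, a cover $z\prec u$, $t=z^{-1}u$, and an arbitrary $\bc\bd$-monomial $M'$ of degree $n-1$, set $M=\bc M'$. The flip condition holds for $[z,v]$ and $\bd M'$, in particular at the first index for paths with second vertex $u$, so the optimality lemma gives $|T_M(u,v)_{\leq t}|\geq|\overline{T}_M(u,v)_{\leq t}|$. By the previous theorem the two sides are the coefficients of $M$ in $f_n+\bc g_{n-1}$ and in $f_n$, so their difference is the coefficient of $M=\bc M'$ in $\bc g_{n-1}$, i.e.\ the coefficient of $M'$ in $g_{n-1}$; hence that coefficient is non-negative. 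As $M'$ was arbitrary, $g_{n-1}$ has non-negative coefficients.

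\emph{Backward implication.} Assume every $g_{n-1}$ is non-negative. I would run the inductive construction of $F$ from Section~4 — defining $F_{w,v}$ as the canonical lexicographic bijection and verifying the flip condition for $[w,v]$ — by downward induction on $l(w)$ with $v$ fixed; the extreme case is vacuous, and in the inductive step all flips $F_{w,v}$ with $l(w)>l(u)$ are already the canonical bijections, so $T$, $\overline{T}$ and the optimality lemma are available on these intervals. Suppose for contradiction that the flip condition fails for $[u,v]$ and some $M$ at an index $m$, along a path $(u\prec x_1\prec\cdots\prec x_n\prec v)$. Passing to the tail $(x_{m-1}\prec x_m\prec\cdots\prec x_n\prec v)$, this is exactly a first-index failure for the interval $[x_{m-1},v]$, the monomial $\bd M''$ with $(\bd M'')(A,DA)=\gamma_m\cdots\gamma_n$ (well defined since $\gamma_m=D$ forces $\gamma_m\gamma_{m+1}=DA$), and a path whose second vertex is $x_m$; moreover $\gamma_{m+1}\cdots\gamma_n=(\bc M'')(A,DA)$. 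Since $l(x_m)>l(u)$, the induction hypothesis supplies the flip condition on $[x_m,v]$, so the optimality lemma applied with the cover $x_{m-1}\prec x_m$, the reflection $t=x_{m-1}^{-1}x_m$ and the monomial $\bc M''$ gives $|T_{\bc M''}(x_m,v)_{\leq t}|<|\overline{T}_{\bc M''}(x_m,v)_{\leq t}|$. By the previous theorem this makes the coefficient of $M''$ in the polynomial $g$ attached to $\widetilde{\phi}^{\leq t}_{x_m,v}$ strictly negative, contradicting the hypothesis. Hence the flip condition holds for $[u,v]$ and $M$, and the induction goes through.

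\emph{Where the difficulty lies.} There is little genuinely new content: the optimality lemma is the Section~4 computation read as an ``if and only if'', and the remark preceding the corollary already sketches both directions. The work is bookkeeping, and the one place it must be done carefully is the induction in the backward implication — checking that each invocation of the optimality lemma and of the previous theorem is made on a strictly higher interval $[x_m,v]$ (so the flip there is the canonical one and the flip condition is already known), that the ``first-index failure'' reformulation of an index-$m$ failure matches conditions (1) and (2) of the definition verbatim, and that the reflection $x_{m-1}^{-1}x_m$ really has the form $z^{-1}w$ with $z\prec w$ required in the statements of the theorem and the corollary.
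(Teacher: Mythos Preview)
Your proposal is correct and follows essentially the same approach as the paper, which sketches both directions in the paragraph immediately preceding the corollary by combining the Section~4 optimality argument with the theorem's identification of $|T_M(u,v)_{\leq t}|$ and $|\overline{T}_M(u,v)_{\leq t}|$ as coefficients of $f_n+\bc g_{n-1}$ and $f_n$. Your write-up is more careful---you make the optimality statement a genuine biconditional, spell out the downward induction for the backward direction, and note that $u^{-1}x_1\neq t$---but the content is the same; one terminological slip is that $z\prec u$ in this paper denotes a Bruhat-graph edge, not necessarily a cover relation, though this does not affect your argument.
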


The strong flip condition defined at the end of previous section is
equivalent to $g_{n-1}$ having non-negative coefficients for any
interval $[u,v]$ and any reflection $t$.
 
By the theorem, the flip condition also implies that the
polynomial $f_n(\bc,\bd)$ has non-negative coefficients.

\begin{proof}

To prove the first statement of the theorem, it suffice to show that
the $AD$-polynomial
\[ \sum_{u^{-1} x_1= t} w(x),\]
where the sum runs over all $x\in B_n(u,v)$ having $t$ as its first
reflection, has the stated form.  This sum can be written as
\begin{align*} & A\sum_{y\in B_{n-1}(x_1,v)_{\leq t}} w(y)+ D
  \sum_{y\in B_{n-1}(x_1,v)_{>t}} w(y) \\ 
 =& A\sum_{y\in B_{n-1}(x_1,v)_{\leq t}} w(y)+ D(
 \sum_{y\in B_{n-1}(x_1,v)} w(y) - \sum_{y\in B_{n-1}(x_1,v)_{\leq t}}
 w(y)) \\ =& (A-D) \sum_{y\in B_{n-1}(x_1,v)_{\leq t}} w(y)+
 D \sum_{y\in B_{n-1}(x_1,v)} w(y).
 \end{align*}
 Here the subscript $>t$ has similar meaning to $\leq t$. Using
 induction, we can write this as
 \begin{align*} & (A-D)(f_{n-1}(\bc,\bd) + Ag_{n-2}(\bc,\bd)) + D
   h_{n-1}(\bc,\bd) \\ 
 &= (2A-A-D) f_{n-1} + (A^2+AD-AD-DA) g_{n-2} +(A+D-A) h_{n-1} \\ &=
   (-\bc f_{n-1} -\bd g_{n-2}+\bc h_{n-1})+ A(2f_{n-1}+\bc g_{n-2} -
   h_{n-1}),
 \end{align*}
for some $\bc\bd$-polynomials $f_{n-1}, g_{n-2}, h_{n-1}$.

The proof of the second statement is very similar to the proof of
Theorem~\ref{thm-main}, so we only sketch it.

Let $N(A,D) = M(A,DA)$ and for $0\leq m\leq n$ write $N=N_m N_{n-m}$,
where $N_m, N_{n-m}$ are $AD$-monomials of degree $m,n-m$,
respectively. Define
\[ P_m^{\leq t}  = \sum_{x\in B_n(u,v)_{\leq t}} w(u\prec
x_1\prec\cdots\prec x_{m+1}) \cdot s_{N_{n-m}}(x_m\prec
x_{m+1}\prec\cdots\prec x_n\prec v).\] 
\[ Q_m^{\leq t}  = \sum_{x\in B_n(u,v)_{\leq t}} \overline{w}(u\prec
x_1\prec\cdots\prec x_{m+1}) \cdot \overline{s}_{N_{n-m}}(x_m\prec
x_{m+1}\prec\cdots\prec x_n\prec v).\]
Note that 
\[ P_n^{\leq t} = \widetilde{\phi}^{\leq t}_{u,v} = f_n(\bc,\bd)+A
g_{n-1}(\bc,\bd),\]
\[ Q_n^{\leq t} = \overline{P}_n^{\leq t} = f_n(\bc,\bd)+D
g_{n-1}(\bc,\bd).\]
On the other hand,
\[ P_0^{\leq t} = |T_M(u,v)_{\leq t}|, \quad Q_0^{\leq t} =
|\overline{T}_M(u,v)_{\leq t}|.\]

\begin{lemma} There exist $\bc\bd$-polynomials $f_m, g_{m-1}, h_{m-1},
  l_{m-2}$ of degree $m,m-1,m-1,m-2$, respectively, such that 
\[ P_m^{\leq t}  = (f_m+A g_{m-1})+(h_{m-1}+Al_{m-2})D.\]
For $m>0$, 
\[ Q_m^{\leq t}  = (f_m+D g_{m-1})+(h_{m-1}+Dl_{m-2})D.\]
Moreover, $P_{m-1}^{\leq t}, Q_{m-1}^{\leq t}$ can be computed from
$P_m^{\leq t}, Q_m^{\leq t} $ as follows.
\begin{enumerate}
\item If $N_m$ ends with $A$ and $m\geq 1$, then
\[ P_{m-1}^{\leq t}  = (f_{m-1}+A g_{m-2})+(h_{m-2}+Al_{m-3})D,\]
where $f_m = f_{m-1}\bc+h_{m-2}\bd$ and $g_{m-1} =
g_{m-2}\bc+l_{m-3}\bd$.
\item If $N_m$ ends with $A$ and $m=1$, let $P_1^{\leq t} = \alpha
  \bc +A\beta +\gamma D$ for $\alpha,\beta,\gamma \in \ZZ$. Then
  $P_0^{\leq t} = \alpha +\beta$ and $Q_0^{\leq t} = \alpha$
\item If $N_m$ ends with $D$, then
\[ P_{m-1}^{\leq t}  =  h_{m-1} +A l_{m-2}.\]\end{enumerate}
\end{lemma}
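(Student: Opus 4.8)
The plan is a downward induction on $m$, from $m=n$ down to $m=0$, in which at each stage I keep track of \emph{both} $P_m^{\leq t}$ and $Q_m^{\leq t}$ in the stated forms; this double bookkeeping is needed because, in contrast to the proof of Theorem~\ref{thm-main}, the truncation $\leq t$ is defined through the order $\cO$, so $Q_m^{\leq t}$ is in general neither equal to $P_m^{\leq t}$ nor to $\overline{P_m^{\leq t}}$ once a nontrivial $s$-factor is present. The base case $m=n$ is immediate: by the first statement of the theorem $P_n^{\leq t}=\widetilde{\phi}^{\leq t}_{u,v}=f_n(\bc,\bd)+Ag_{n-1}(\bc,\bd)$, and since for $m=n$ there is no $s$-factor we have $Q_n^{\leq t}=\sum_{x}\overline{w(x)}=\overline{P_n^{\leq t}}=f_n+Dg_{n-1}$; so both forms hold with $h_{n-1}=l_{n-2}=0$.

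For the inductive step I split into cases by the last letter of $N_m$, exactly as in Theorem~\ref{thm-main}. If $N_m$ ends with $A$, then passing from $P_m^{\leq t}$ to $P_{m-1}^{\leq t}$ retains only those monomials of the $w$-expansion ending in $A$ and deletes that $A$ — a right-contraction by $A$ — and likewise $Q_{m-1}^{\leq t}$ is the right-contraction of $Q_m^{\leq t}$ by $A$. Writing $f_m=f_{m-1}\bc+h_{m-2}\bd$ and $g_{m-1}=g_{m-2}\bc+l_{m-3}\bd$ and carrying out these contractions in $\ZZ\langle A,D\rangle$ yields formula (1) together with the matching form of $Q_{m-1}^{\leq t}$; this is routine. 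When moreover $m=1$, the same contraction applies, but now $f_1=\alpha\bc$ for an integer $\alpha$, so writing $P_1^{\leq t}=\alpha\bc+A\beta+\gamma D=(\alpha+\beta)A+(\alpha+\gamma)D$ one reads off $P_0^{\leq t}=\alpha+\beta$, and from $Q_1^{\leq t}=\alpha\bc+D\beta+\gamma D$ one reads off $Q_0^{\leq t}=\alpha$, which is formula (2).

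The case $N_m$ ends with $D$ (so automatically $m\geq1$) is the analogue of the harder case of Theorem~\ref{thm-main} and is where the real work is. Let $F_m\colon B_n(u,v)\to B_n(u,v)$ be the involution flipping the tail of a path from $x_m$ onward via $F_{x_m,v}$. The crucial point for the truncated statement is that $F_m$ fixes $x_1$ as soon as $m\geq1$, hence restricts to an involution of $B_n(u,v)_{\leq t}$ — which is exactly why the transition $m=1\to m=0$ had to be separated off in case (2), since $F_0$ would move $x_1$. Reindexing the defining sum for $Q_m^{\leq t}$ by $x\mapsto F_m(x)$ and using the compatibility of $F$ with $\cO$ gives
\[ Q_m^{\leq t}=\sum_{x\in B_n(u,v)_{\leq t}}\overline{w}(u\prec x_1\prec\cdots\prec x_m\prec y_{m+1})\cdot s_{N_{n-m}}(x_m\prec x_{m+1}\prec\cdots\prec v),\]
where $(x_m\prec y_{m+1}\prec\cdots)=F_{x_m,v}(x_m\prec x_{m+1}\prec\cdots)$. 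Applying the bar and subtracting from $P_m^{\leq t}$, the terms with $\beta_m=\alpha_m$ drop out, and the identity $\pm s_{N_{n-m}}(x_m\prec\cdots)=s_{DN_{n-m}}(x_{m-1}\prec x_m\prec\cdots)$ from Theorem~\ref{thm-main} (same sign convention) collapses the remaining sum to $P_{m-1}^{\leq t}(D-A)$. On the other hand, the stated forms give $P_m^{\leq t}-\overline{Q_m^{\leq t}}=(h_{m-1}+Al_{m-2})(D-A)$, and since right multiplication by $D-A$ is injective on $\ZZ\langle A,D\rangle$ we conclude $P_{m-1}^{\leq t}=h_{m-1}+Al_{m-2}$, which is formula (3). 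The mirror computation — with $w,\overline{w}$ and $s,\overline{s}$ interchanged, which introduces a sign because $\overline{s}_{m,D}=-s_{m,D}$ that is then reabsorbed by a second application of $F_m$ — yields $Q_m^{\leq t}-\overline{P_m^{\leq t}}=Q_{m-1}^{\leq t}(D-A)$, hence $Q_{m-1}^{\leq t}=h_{m-1}+Dl_{m-2}$, so $Q_{m-1}^{\leq t}$ is again of the required form (with $\bc\bd$-components $h_{m-1},l_{m-2},0,0$) and the induction continues.

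I expect the main obstacle to be precisely this sign-and-symmetry bookkeeping in the $N_m$-ends-with-$D$ step: because $Q_m^{\leq t}$ and $P_m^{\leq t}$ are genuinely distinct here, one must correctly match the sign changes produced by the flip $F$ and by the order reversal $\cO\leftrightarrow\overline{\cO}$, and confirm at each use that $F_m$ really does preserve $B_n(u,v)_{\leq t}$; isolating the single exceptional transition $m=1\to0$ is the one place where this preservation fails and must be handled separately, as in case (2).
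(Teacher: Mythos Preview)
Your argument is correct and follows essentially the same route as the paper's own proof: downward induction on $m$, right-contraction by $A$ when $N_m$ ends with $A$, and the identity $P_{m-1}^{\leq t}(D-A)=P_m^{\leq t}-\overline{Q_m^{\leq t}}$ (together with its $Q$-analogue) when $N_m$ ends with $D$. You spell out two points that the paper leaves implicit under ``as before'': that the tail-flip $F_m$ preserves $B_n(u,v)_{\leq t}$ precisely because $m\geq 1$ leaves $x_1$ fixed, and that the companion identity $Q_{m-1}^{\leq t}(D-A)=Q_m^{\leq t}-\overline{P_m^{\leq t}}$ follows by the mirror computation; both are handled correctly.
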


\begin{proof} 
If $N_m$ ends with $A$, we contract $P_m^{\leq t}$ and $Q_m^{\leq t}$
with $A$ from the right to get $P_{m-1}^{\leq t}$ and $Q_{m-1}^{\leq t}$.

If $N_m$ ends with $D$, then as before,
\[ P_{m-1}^{\leq t} (D-A) =  P_m^{\leq t} - \overline{Q}_m^{\leq t} =
h_{m-1}(D-A) +A l_{m-2}(D-A),\]
\[ Q_{m-1}^{\leq t} (D-A) =  Q_m^{\leq t} - \overline{P}_m^{\leq t} =
h_{m-1}(D-A) +D l_{m-2}(D-A).\]
\end{proof}

Let $m$ be such that $M = M_m M_{n-m}$, where $M_m, M_{n-m}$ are
$\bc\bd$-monomials of degree $m, n-m$, respectively. The lemma then
implies:
\begin{enumerate}
\item If $M_m$ ends with $\bc$ and $m>1$, then $f_{m-1}+A g_{m-2}$ is
  obtained by contracting $f_m+A g_{m-1}$ with $\bc$ from the right.
\item If $M_m$ ends with $\bc$ and $m=1$, then $P_0$ is obtained
  by contracting $f_{1}+\bc g_{0}$ with $\bc$ from the right and
  $Q_0$ is obtained by contracting $f_{1}$ with $\bc$ from the right.
\item If $M_m$ ends with $\bd$, then $f_{m-2}+A g_{m-3}$ is obtained
  by contracting $f_m+A g_{m-1}$ with $\bd$ from the right.
\end{enumerate}

It follows from this that if $P_n^{\leq t} = f_n+A g_{n-1}$, then
$P_0^{\leq t}$ is obtained from $f_n+\bc g_{n-1}$ by contracting with
$M$. Thus, $P_0^{\leq t} =|T_M(u,v)_{\leq t}|$ is the coefficient of
$M$ in $f_n+\bc g_{n-1}$. Similarly, $Q_0^{\leq t}$ is obtained from $f_n$ by
contracting with $M$, hence $Q_0^{\leq t}= |\overline{T}_M(u,v)_{\leq t}|$ is the
coefficient of $M$ in $f_n$.
\end{proof}

\bibliographystyle{plain}
\bibliography{complete}{}

\begin{thebibliography}{1}

\bibitem{BB}
L.~J. {Billera} and F.~{Brenti}.
\newblock {Quasisymmetric functions and Kazhdan-Lusztig polynomials}.
\newblock {\em ArXiv e-prints}, October 2007.

\bibitem{Bjorner-Brenti}
A.~Bj{\"o}rner and F.~Brenti.
\newblock {\em Combinatorics of {C}oxeter groups}, volume 231 of {\em Graduate
  Texts in Mathematics}.
\newblock Springer, New York, 2005.

\bibitem{Dyer-S}
M.~J. Dyer.
\newblock Hecke algebras and shellings of {B}ruhat intervals.
\newblock {\em Compositio Math.}, 89(1):91--115, 1993.

\bibitem{Dyer-C}
M.~J. Dyer.
\newblock {Proof of Cellini's conjecture on self-avoiding paths in Coxeter
  groups}.
\newblock {\em Preprint}, February 2011.

\bibitem{Humphreys}
J.~E. Humphreys.
\newblock {\em Reflection groups and {C}oxeter groups}, volume~29 of {\em
  Cambridge Studies in Advanced Mathematics}.
\newblock Cambridge University Press, Cambridge, 1990.

\bibitem{Karu}
K.~Karu.
\newblock The {$cd$}-index of fans and posets.
\newblock {\em Compos. Math.}, 142(3):701--718, 2006.

\end{thebibliography}

\end{document}